\crefname{theorem}{theorem}{theorems}
\Crefname{theorem}{Theorem}{Theorems}
\crefname{lemma}{lemma}{lemmas}
\Crefname{lemma}{Lemma}{Lemmas}
\crefname{proposition}{proposition}{propositions}
\Crefname{proposition}{Proposition}{Propositions}
\newtheorem{theorem}{Theorem}[section]
\newtheorem{lemma}[theorem]{Lemma}
\newtheorem{proposition}[theorem]{Proposition}
\numberwithin{equation}{section}
\newcommand{\abs}[1]{\lvert #1 \rvert}
\newcommand{\calL}{\mathcal{L}}
\newcommand{\calM}{\mathcal{M}}
\newcommand{\calX}{\mathcal{X}}
\newcommand{\bbP}{\mathbb{P}}
\newcommand{\bfell}{\boldsymbol{\ell}}
\newcommand{\vvirg}{ , \dots , }
\newcommand{\bbk}{\Bbbk}
\newcommand{\dashto}{\dashrightarrow}
\newcommand{\rank}{\mathrm{rank}}
\renewcommand{\bar}[1]{\overline{#1}}
\begin{document}

\title{Identifiability through special linear measurements}

\author[F. Gesmundo]{Fulvio Gesmundo${}^\ast$ }
\thanks{${}^\ast$Institut de Mathématiques de Toulouse; UMR5219 -- Université de Toulouse; CNRS -- UPS, F-31062 Toulouse Cedex 9, France}

\author[A. Grosdos]{Alexandros Grosdos${}^\dagger$}
\thanks{${}^\dagger$Institute of Mathematics, University of Augsburg, 86159 Augsburg, Germany}

\author[A. Uschmajew]{Andr\'e Uschmajew${}^\ddagger$}
\thanks{${}^\ddagger$Institute of Mathematics \& Centre for Advanced Analytics and Predictive Sciences, University of Augsburg, 86159 Augsburg, Germany}

\keywords{linear measurement, algebraic compressed sensing, Noether normalization, low-rank models}
\subjclass[2020]{14Q15, 15A29, 90C30}

\begin{abstract}
    We show that one can always identify a point on an algebraic variety $X$ uniquely with $\dim X +1$ generic linear measurements taken themselves from a variety under minimal assumptions. As illustrated by several examples the result is sharp, that is, $\dim X$ measurements are in general not enough for unique identifiability.
\end{abstract}

\maketitle

\section{Introduction}

Identifying a vector or function from finitely many linear measurements is a central problem in applied mathematics, and plays a particular role in numerical analysis and data science. Often, the object to be identified is assumed to lie in a certain low-parametric model class: this reduces the intrinsic dimensionality of the problem and at the same time increases the identifiability by linear measurements. 

In a finite-dimensional setting, the problem can be formulated as follows. Given a model $\mathcal X \subseteq V$ in a finite-dimensional vector space $V$, one wishes to recover a particular element $x \in \calX$ from $n$ linear measurements
\begin{equation}\label{eq: recovery problem}
y_i = \ell_i(x), \qquad i = 1,\dots,n,
\end{equation}
where $\ell_1,\dots,\ell_n \in V^*$ are linear functionals on $V$. It is then natural to ask whether it is possible to recover $x$ from the given information $y = (y_1 \vvirg y_n)$ and what the minimal number $n$ of required measurements is. Desirably, the number of required measurements should not differ too much from the dimension of $\calX$. Further, one can ask for practical methods for recovering $x$ from $y$ in a reliable and stable way; however, in this work we only focus on the first question.

When $\calX$ is a linear subspace of $V$ one speaks of a linear model. For the sake of simplicity, we may take $\mathcal X = V$. Then the above task is just a generalized interpolation problem and standard linear algebra guarantees that $n = \dim V$ linearly independent functionals are necessary and sufficient to recover $x$ uniquely from $y$. Moreover, given a basis of $V$ the solution $x$ can be computed by solving a system of linear equations. An important example of this kind is the recovery of a polynomial $p \in V = \mathbb R_m[t]$ of degree at most $m$ from the evaluation at $n = m+1$ distinct points $t_0,t_1,\dots,t_m \in \mathbb R$:
\[
y_i = p(t_i), \qquad i=0,1,\dots,m.
\]
We highlight that in this example the set of linear functionals on $V = \mathbb R_m[t]$ that are point evaluations is a nonlinear subset of $V^*$: it is an algebraic variety called the (affine) \emph{rational normal curve} of degree $m$ \cite[Example 1.14]{Harris:AlgGeo}.

Another example of a nonlinear model arising in several application is the set $\calX = \calM_{\le k}$ of matrices of rank at most $k$ in the space $V = \mathbb{R}^{d_1 \times d_2}$ of real matrices of size $d_1 \times d_2$. This is a real algebraic variety of dimension $\dim \calM_{\le k} = (d_1+d_2-k)k$: its defining equations are the minors of size $k+1$, regarded as polynomials in the entries of the matrix. We then wish to recover a matrix $A \in \calM_{\le k}$ from linear measurements
\[
y_i = \ell_i(A) = \langle A, Y_i \rangle, \qquad i=1,\dots,n,
\]
where $\langle \cdot, \cdot \rangle$ is the Frobenius inner product. This problem has been intensively studied in compressed sensing under the name \emph{matrix sensing}; see~\cite{Davenport:2016} for an overview. It has been shown that using 
$n  \geq C \cdot \dim \mathcal X$
random (Gaussian) linear measurements, where $C$ is a constant, a rank-$k$ matrix is determined uniquely from its measurements with very high probability~\cite{Candes:2011}. In addition to this, the linear measurement operator $\bfell : A \mapsto [\ell_i(A)]_{i=1,\dots,n}$ itself satisfies a restricted isometry property, which enables recovery by practical algorithms such as nuclear norm minimization~\cite{Recht:2010,Candes:2011} or iterative hard thresholding~\cite{Jain2010,Goldfarb2011}. If only unique identifiability is required, then $n = \dim \calM_{\le k} + 1$ generic measurements suffice for recovering a generic element $A \in \calM_{\le k}$. This can be explained via classical algebraic geometry, as a consequence of the Noether Normalization Lemma, see e.g.~\cite[Question 7]{BGMV:AlgebraicCompressedSensing}. 

So, the setting of the recovery of matrices of bounded rank is in a way dual to the one of polynomial interpolation. In the polynomial interpolation case, the model is linear and the measurements are taken in a non-linear set $\calL \subseteq V^*$; conversely in the matrix sensing setting, the model $\calX$ is non-linear, an the measurements can be taken in the full space of $V^*$. There are settings where both the model $\calX$ and the set of measurement $\calL$ are non-linear; see \Cref{sec: intro motivating examples}. The goal of this work is to give a version of the Noether Normalization Lemma which applies in such general cases.

\subsection{Problem statement and main result}

We consider the following fundamental problem. Let $\calX \subseteq V$ be an affine algebraic variety of dimension $n$ in a real or complex vector space $V$.  Let $\calL \subseteq V^*$ be an irreducible algebraic variety not contained in a hyperplane of $V^*$. Given $x \in \calX$, we ask: 
\begin{center}
\emph{What is the minimum number~$r$ such that $r$ generic elements of $\calL$ uniquely identify $x$?}
\end{center}
Clearly, for dimension reasons, we should have $r \geq \dim \calX$ in general. The goal of this note is to prove the following result. 
\begin{theorem}\label{thm: main informal}
Let $V$ be a finite-dimensional vector space. Let $\calX \subseteq V$ be an algebraic variety and $x \in \calX$. Let $\calL \subseteq V^*$ be an irreducible algebraic variety not contained in a hyperplane of $V^*$. Then $\dim \calX +1$ generic measurements chosen in $\calL$ uniquely identify~$x$.
\end{theorem}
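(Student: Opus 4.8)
The plan is to recast identifiability of $x$ in terms of the fibers of a projection from an incidence variety, and then finish with a dimension count. Fix $x\in\calX$ and set $n=\dim\calX$ and $r=n+1$. For nonzero $v\in V$ write $H_v=\{\ell\in V^*:\ell(v)=0\}$ for the annihilating hyperplane of $V^*$. A tuple $(\ell_1\vvirg\ell_r)\in\calL^r$ fails to identify $x$ precisely when there is a point $x'\in\calX$, $x'\neq x$, with $\ell_i(x')=\ell_i(x)$ for all $i$, that is, with $\ell_1\vvirg\ell_r\in H_{x'-x}$. Accordingly, I would introduce the incidence variety
\[
\mathcal{J}=\bigl\{\,(x',\ell_1\vvirg\ell_r)\in(\calX\setminus\{x\})\times\calL^r \ :\ \ell_i(x'-x)=0,\ i=1\vvirg r\,\bigr\},
\]
a locally closed subset of $V\times(V^*)^r$, and show that the projection $\pi\colon\mathcal{J}\to\calL^r$ onto the last $r$ factors is \emph{not} dominant. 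By Chevalley's theorem $\pi(\mathcal{J})$ is constructible, so, using that $\calL^r$ is irreducible, non-dominance provides a dense open subset of $\calL^r$ whose every tuple identifies $x$, which is the assertion. Note that discarding $x'=x$ is essential: the fiber of the first projection over $x'=x$ is all of $\calL^r$.

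To bound $\dim\mathcal{J}$ I would use the complementary projection $p\colon\mathcal{J}\to\calX\setminus\{x\}$, whose fiber over a fixed $x'\neq x$ is $(\calL\cap H_{x'-x})^r$. This is the one place the hypotheses on $\calL$ enter, and they enter essentially: since $x'-x\neq 0$, the hyperplane $H_{x'-x}$ is a proper linear subspace of $V^*$, and since $\calL$ is irreducible and not contained in any hyperplane of $V^*$, the intersection $\calL\cap H_{x'-x}$ is a proper closed subvariety of $\calL$; hence $\dim(\calL\cap H_{x'-x})\leq\dim\calL-1$, a bound uniform in $x'$. Consequently every fiber of $p$ has dimension at most $r(\dim\calL-1)$, and the theorem on the dimension of fibers yields
\[
\dim\mathcal{J}\ \leq\ \dim(\calX\setminus\{x\})+r(\dim\calL-1)\ \leq\ n+r\dim\calL-r.
\]
With $r=n+1$ the right-hand side equals $r\dim\calL-1<r\dim\calL=\dim\calL^r$, so $\pi$ cannot be dominant and the argument is complete. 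Observe that this uses only $\dim(\calX\setminus\{x\})\leq\dim\calX$, so no irreducibility of $\calX$ is needed.

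I do not anticipate a serious obstacle: the proof is essentially a single fiber-dimension computation, and its heart is the uniform ``loss of exactly one dimension'' obtained by intersecting $\calL$ with $H_{x'-x}$, which is exactly what the irreducibility and non-degeneracy of $\calL$ provide; the only point demanding care is keeping this bound uniform over all $x'\neq x$, and that is precisely what the hypothesis on $\calL$ delivers. A routine matter to tidy is the ground field: the dimension bookkeeping is cleanest over an algebraically closed field, so for a real vector space $V$ I would reduce to the complex case by complexification, checking that this preserves $\dim\calX$ and the property that $\calL$ (or each of its conjugate components) is not contained in a hyperplane, and that a Zariski-dense open subset over $\bbC$ meets the real locus in a dense open set. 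Finally, it is instructive to see why $n+1$ rather than $n$ measurements are needed: with $r=n$ the displayed estimate only gives $\dim\mathcal{J}\leq\dim\calL^r$, which does not preclude dominance, and for suitable $\calX$ and $\calL$ a positive-dimensional family of spurious points $x'$ does make $\pi$ dominant — this is the sharpness recorded in the abstract.
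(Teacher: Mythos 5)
Your proof is correct, but it takes a genuinely different route from the paper. The paper argues by induction on $\dim \calX$: its key lemma (a generic $\ell \in \calL$ separates any two fixed points $v_1,v_2$, because $\{\ell : \ell(v_1)=\ell(v_2)\}$ is a hyperplane that cannot contain the irreducible nondegenerate $\calL$) is applied $n$ times in a Bertini-style cutting argument to show that the $n$ generic affine sections $\{\ell_i = y_i\}$ reduce $\calX$ to a finite set, and then once more to separate the finitely many surviving points with $\ell_0$. Your incidence-variety argument replaces the induction by a single fiber-dimension count, and the same elementary fact --- a proper closed subvariety of the irreducible $\calL$ has strictly smaller dimension --- enters as the uniform bound $\dim(\calL \cap H_{x'-x}) \leq \dim \calL - 1$ on the fibers of $p$. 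Your version is more compact and makes the role of the ``$+1$'' completely transparent (with $r=n$ the count is exactly tight, matching the sharpness examples). What the paper's route buys in exchange is the stronger intermediate statement that $n$ generic cuts already leave only finitely many points, for \emph{arbitrary} right-hand sides $y_i$ rather than only $y_i = \ell_i(x)$ (its Proposition 2.2), which it exploits in Section 3 when discussing degree and sharpness, together with an explicit recursive description of the good open subset of $\calL^{\times(n+1)}$. Both arguments carry the same mild caveat over a non-algebraically-closed infinite field (density of $\bbk$-points of the good open set, behavior of irreducibility and dimension under base change), which you correctly flag as a routine matter rather than a gap.
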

We point out that \Cref{thm: main informal} holds over every infinite field. The applications are most often over real or complex numbers. 

\subsection{Structured measurement and non-linear models}\label{sec: intro motivating examples}

In this second part of the introduction we discuss in more detail some scenarios from the areas of low-rank methods and polynomial optimization, in which both the model $\calX$ and the set $\calL$ of measurements are non-linear. This discussion is not directly related to the proof of \Cref{thm: main informal}, but serves as a motivation for this work.

In the context of low-rank matrix recovery two notable special cases can be mentioned. The first is \emph{matrix completion}. In this case the measurements are the single entries of the matrix $A \in \calM_{\le k}$, that is, they are of the form
\[
\ell_{\mu,\nu}(A) = a_{\mu \nu} = \langle A, E_{\mu \nu} \rangle
\]
where $E_{\mu \nu}$ is the matrix with one at position $(\mu,\nu)$ and zero elsewhere. In particular, the set of available linear functionals here is finite: in this case \Cref{thm: main informal} does not apply because the variety $\calL$ is not irreducible. In fact, for every fixed finite set of linear functionals, one can always find models $\calX \subseteq \dim V$ for which unique identifiability requires the whole set of $\dim V$ measurements, see e.g.~\cite[Example 12]{BGMV:AlgebraicCompressedSensing}. Nevertheless, it is known that in the matrix completion setting, a logarithmic oversampling with randomly distributed entries in combination with incoherence assumptions allows one (with high probability) to recover $A$ uniquely with convex optimization methods such as nuclear norm minimization~\cite{Candes:2009b,Gross2011,Recht2011}. On the other hand, logarithmic oversampling is not required for the identifiability itself if one instead is allowed to select the entries to be sampled. For instance, the method of cross  approximation, also known as CUR approximation, determines a rank-$k$ matrix from a cross of $\dim \mathcal \calM_{\le k} = (d_1+d_2-k)k$ entries; see \Cref{subsec: cutsInfinity}. 

The second example of structured measurements for recovering a low-rank matrix $A \in \calM_{\le k}$ concerns \emph{rank-one measurements}. In this case, the linear measurements are of the form 
\begin{equation}\label{eq: bilinear measurments}
y_i = \ell_i(A) = \langle A, \xi^{(i)} \otimes \eta^{(i)} \rangle = (\xi^{(i)})^\top A \eta^{(i)}, \qquad \xi^{(i)} \in \mathbb R^{d_1}, \quad \eta^{(i)} \in \mathbb R^{d_2}.
\end{equation}
Such measurements are also called \emph{bilinear measurements} and arise naturally in certain applications based on the \emph{lifting} technique~\cite{Davenport:2016}. Recovery theory for such measurements, especially the design of stable algorithms, is much less developed. Note that in this setting, the measurements come themselves from an algebraic variety, in this case the manifold of (usually normalized) rank-one matrices. Thus, even the basic question of identifiability is not straightforward, since the genericity assumption of the Noether Normalization Lemma is not satisfied. However, \Cref{thm: main informal} guarantees that $n = \dim \calM_{\le k} +1$, generic bilinear measurements (i.e.~for generic choices of $\xi^{(i)},\eta^{(i)}$) are sufficient for unique identifiability of $A \in \calM_{\le k}$. 

The last example shows an interesting connection between matrix recovery and interpolation of polynomials. If in~\eqref{eq: bilinear measurments} $A \in \mathbb R^{d \times d}$ is symmetric and $\xi^{(i)} = \eta^{(i)}$, then the measurements $y_i$ are point evaluations of the quadratic form 
\[
p(\xi) = \langle \xi, A \xi \rangle = \langle A, \xi \otimes \xi \rangle = \sum_{\mu=1}^d \sum_{\nu =1}^d a_{\mu \nu}\xi_\mu \xi_\nu
\]
on $\mathbb R^d$. Therefore recovering a quadratic form from point evaluations $y_i = q(\xi^{(i)})$ is equivalent to recovering the underlying symmetric matrix $A$ from symmetric rank-one measurements. More generally, recovering a (homogeneous) polynomial of degree $m$ in $d$ variables from point evaluations is equivalent to recovering a corresponding symmetric tensor of size $d^{\times m}$ of order $m$ from rank-one measurements. For inhomogeneous polynomials, the setting is similar: one can interpret them as symmetric tensor in $(d+1)^{\times m}$ using a \emph{lifted} vector of the form $(1,\xi)$.

In general, several applications are concerned with recovering polynomials, or equivalently symmetric tensors, from point evaluations. In this setting, one is given an oracle that performs evaluations of a polynomial $p \in V = \mathbb{R}[t_1 \vvirg t_d]_{\leq m}$, and the task is to uniquely determine $p$ from as few evaluations as possible. The evaluation linear functionals $\ell_\xi : p \mapsto p(\xi)$ form an algebraic subvariety of $V^*$, called the (affine cone over the) Veronese variety. With no a priori knowledge, recovery of a polynomial of degree $m$ in $d$ variables requires $\dim V = \binom{m+d}{d}$ evaluations. However, applications are often concerned with polynomials lying on specific subsets of $V$, such as sparse polynomials, polynomials of low Waring rank, or polynomials admitting small (structured) circuits \cite{GKS,KS}. In all these settings \Cref{thm: main informal} applies and guarantees that, if the polynomial $p$ lies in a variety $\calX \subseteq V$ of dimension $n$, then $n+1$ generic evaluations are sufficient to uniquely identify~$p$.

In numerical analysis and approximation theory, multivariate functions are also often discretized using tensor representations. Specifically, for $j=1,\dots,d \ge 3$ let $V^{(j)}$ be an $m_{j}$-dimensional space of univariate real functions on $\Omega^{(j)} \subseteq \mathbb R$ spanned by suitable basis functions $\varphi^{(j)}_1,\dots,\varphi^{(j)}_{m_j}$. Then the tensor product space $V^{(1)} \otimes \dots \otimes V^{(d)}$ contains $d$-variate real functions on $\Omega^{(1)} \times \dots \times \Omega^{(d)} \subseteq \mathbb R^d$ of the form
\begin{equation}\label{eq: tensor product function}
p(\xi) = \sum_{\mu_1 = 1}^{m_1} \cdots \sum_{\mu_d = 1}^{m_d} a_{\mu_1 \mu_2 \cdots \mu_d} (\varphi^{(1)}_{\mu_1} \otimes \cdots \otimes \varphi^{(d)}_{\mu_d})(\xi),
\end{equation}
where $A \in \mathbb R^{m_1 \times \dots \times m_d}$ is the coefficient tensor defining $p$ with respect to the tensor product functions 
\[
(\varphi^{(1)}_{\mu_1} \otimes \cdots \otimes \varphi^{(d)}_{\mu_d})(\xi) =  \varphi^{(1)}_{\mu_1}(\xi_1) \cdots \varphi^{(d)}_{\mu_d}(\xi_d),
\]
which form a basis of $V^{(1)} \otimes \dots \otimes V^{(d)}$. The representation~\eqref{eq: tensor product function} suffers from the curse of dimensionality, since for large $d$ the coefficient tensor $A$ can neither be computed nor  be stored. Numerical tensor methods mitigate this problem by employing low-rank models for the tensor $A$ such as tree tensor network representations;~see the survey articles~\cite{Bachmayr2016,Bachmayr2023} and monographs~\cite{Hackbusch2019,Khoromskij2018} for overview. Then the task of learning $p$ or $A$ in~\eqref{eq: tensor product function} from point evaluations can be of interest. Note that one can write
\begin{equation}\label{eq: tensor product function 2}
p(\xi) = \langle A, \Phi(\xi) \rangle
\end{equation}
where $\Phi \colon \Omega^{(1)} \times \dots \times \Omega^{(d)} \to \mathbb R^{m_1 \times \dots \times m_d}$ maps the point $\xi \in \mathbb R^d$ to the rank-one tensor containing the evaluation of all tensor product basis functions in $\xi$ as entries, that is,
\[
[\Phi(\xi)]_{\mu_1,\dots,\mu_d} = (\varphi^{(1)}_{\mu_1} \otimes \cdots \otimes \varphi^{(d)}_{\mu_d})(\xi).
\]
Hence recovering $p$ in~\eqref{eq: tensor product function} from point evaluations is the same as identifying the coefficient tensor $A$ from the specific rank-one measurements in the image of $\Phi$.

From a perspective of machine learning, $\Phi$ in~\eqref{eq: tensor product function 2} can be interpreted as a particular feature map sending data points $\xi \in \mathbb R^d$ to rank-one tensors $\Phi(\xi)$ in $\mathbb R^{m_1 \times \dots \times m_d}$. The tensor $A$ then becomes the normal vector for a classifying hyperplane in the feature space. Such a construction has been considered in several works, e.g.~\cite{Stoudenmire2016,Novikov2018,Chen2018,Kargas2021,Michel2022}, where again low-rank tensor network models $\calX \subseteq V = \mathbb R^{m_1 \times \dots \times m_d}$ are proposed for learning the high-dimensional tensor~$A$, and ultimately the function $p$ from labelled data $y_i = p(\xi^{(i)}) = \langle A, \Phi(\xi^{(i)}) \rangle$. These low-rank models often form algebraic varieties. Different basis functions $\varphi^{(j)}_{\mu_j}$ for the single variables then lead to different nonlinear families of classifiers. When polynomials are used~\cite{Novikov2018,Chen2018}, e.g., $\varphi^{(j)}_\mu(t) = t^{\mu-1}$ for $\mu=1,\dots,m$ and every $j$, then the available rank-one measurements $\Phi(\xi) =[\xi_1^{\mu_1} \cdots \xi_d^{\mu_d}]$ form a subset dense in an algebraic subvariety~$\mathcal L$ within the variety of rank-one tensors in $\mathbb R^{m \times \dots \times m}$. In an idealized setting of noiseless data one then may ask how many samples are sufficient to exactly learn the coefficient tensor $A \in \calX$ of the classifier from this type of measurements.

\section{Proof of main result}
\subsection{Preliminaries}

The proof of \Cref{thm: main informal} relies on some basic properties of algebraic varieties. The key element that we use is the property that if $Y$ is an \emph{irreducible} algebraic variety of dimension $m$ then any subvariety of $Y$ has dimension at most $m-1$.  This holds both in the affine and projective setting;~see,~e.g.,~\cite[Theorem 1.19]{Shafarevich:1596976}. We assume some basics of algebraic geometry, such as the definition of algebraic varieties, dimension, degree and irreducibility.

We comment here on the notion of genericity. Usually, one says that a property holds \emph{generically} in an irreducible variety $Z$ if it holds for every $z \in U$ where $U$ is a Zariski open subset of $Z$; such open set is often not specified, and in many cases it is not known explicitly. An important fact is that if $Z$ is a subvariety of a Euclidean space, then the closed subset on which a generic property does not hold has measure zero with respect to the Lebesgue measure induced on $Z$; in particular, one can say that if a property holds generically then it holds with probability one, with respect to any probability distribution on $Z$ which is absolutely continuous with respect to the Lebesgue measure.

In our setting, and in particular in the statement of \Cref{thm: main informal}, we say that a collection $\ell_0, \dots, \ell_n$ of measurements in $\calL$ is \emph{generic} with the meaning that $(\ell_0,\dots,\ell_n)$ is generic in $\calL^{\times (n+1)} \subseteq (V^*)^{\times (n+1)}$. In other words, the measurements $(\ell_0 \vvirg \ell_n)$ for which the statement of \Cref{thm: main informal} holds is (more precisely, contains) the complement of a subset of measure zero in $\calL^{\times (n+1)}$. In fact, from the proof of~\Cref{prop:special bertini cuts} below, one can very explicitly construct this set. 

\subsection{Proof for affine setting}

We first prove the main theorem in the affine setting. The proof is largely inspired by projective geometry and we will expand on this point of view in the next subsection. In the following, $\bbk$ is any infinite field and $V$ is a $\bbk$-vector space of finite dimension.

\begin{lemma}\label{lemma: generic is nonconstant}
    Let $\calL \subseteq V^*$ be an irreducible algebraic variety not contained in any hyperplane. Let $v_1,v_2 \in V$. Then the set 
    \[
    C = \{ \ell \in \calL : \ell(v_1) = \ell(v_2) \}
    \]
    is an algebraic variety strictly contained in $\calL$ (possibly empty). In particular, a generic element of $\calL$ takes distinct values on $v_1$ and $v_2$.
\end{lemma}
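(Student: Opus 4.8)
The plan is to translate the two-point condition into a single linear condition and recognize $C$ as a hyperplane section of $\calL$. Set $w = v_1 - v_2 \in V$; then $\ell(v_1) = \ell(v_2)$ if and only if $\ell(w) = 0$, so $C = \{\ell \in \calL : \ell(w) = 0\}$. The case $v_1 = v_2$ is degenerate (there $C = \calL$), so one assumes $v_1 \neq v_2$, i.e.\ $w \neq 0$.

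Next I would observe that, since $V$ is finite-dimensional, the evaluation map $\mathrm{ev}_w \colon V^* \to \bbk$, $\ell \mapsto \ell(w)$, is a linear functional on $V^*$ which is nonzero precisely because $w \neq 0$; its kernel $H_w := \{\ell \in V^* : \ell(w) = 0\}$ is therefore a hyperplane of $V^*$, and in fact every hyperplane of $V^*$ arises this way (using $(V^*)^* = V$). Hence $C = \calL \cap H_w$ is the zero locus on $\calL$ of the restriction of the linear form $\mathrm{ev}_w$, so $C$ is an algebraic subvariety of $\calL$.

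The hypothesis on $\calL$ then does the rest: because $\calL$ is not contained in any hyperplane of $V^*$, we have $\calL \not\subseteq H_w$, so $C = \calL \cap H_w \subsetneq \calL$ is strictly contained in $\calL$ (and it may well be empty, for instance if $\calL$ lies in an affine hyperplane not passing through the origin). For the ``in particular'' statement: since $\calL$ is irreducible and $C$ is a proper subvariety, $\calL \setminus C$ is a nonempty Zariski-open subset of $\calL$ — a proper closed subset of an irreducible variety cannot be dense — so a generic $\ell \in \calL$ lies outside $C$, meaning $\ell(v_1) \neq \ell(v_2)$.

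I expect no genuine obstacle here; the argument is a direct unwinding of definitions. The only points requiring a little care are keeping track of the degenerate case $v_1 = v_2$, and correctly invoking the finite-dimensionality of $V$ to identify $\{\ell : \ell(w) = 0\}$ with a bona fide hyperplane of $V^*$, so that the hypothesis on $\calL$ applies. If one additionally wants $\dim C \le \dim \calL - 1$ explicitly, this is precisely the fact recalled in the preliminaries that a proper subvariety of an irreducible variety has strictly smaller dimension; but for the statement as phrased only the strict inclusion $C \subsetneq \calL$ is needed.
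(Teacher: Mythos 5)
Your proof is correct and follows essentially the same route as the paper's: identify $C$ as the intersection of $\calL$ with the hyperplane $\{\ell : \ell(v_1-v_2)=0\}$ and invoke nondegeneracy of $\calL$. Your explicit handling of the degenerate case $v_1=v_2$ (which the paper's one-line proof glosses over, and which is implicitly excluded in the lemma's use) is a small but welcome extra care.
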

\begin{proof}
The set $H = \{ \ell \in V^* : \ell(v_1) = \ell(v_2)\}$ is a hyperplane in $V^*$ and $C = \calL \cap H$. Since $\calL$ is an irreducible variety not contained in any hyperplane, we have that $\calL \not \subseteq H$, therefore $C$ is a proper subvariety of $\calL$.
 \end{proof}

Let $\ell \in V^*$ be a linear form and let $y \in \bbk$; then $\ell-y$ is an affine linear form and we write $H(\ell- y)$ for the affine hyperplane that it defines, that is
\[
H(\ell - y) = \{ v \in V : \ell(v) - y = 0\}.
\]

\begin{proposition}\label{prop:special bertini cuts}
    Let $\calX \subseteq V$ be a (possibly reducible) algebraic variety of dimension~$n$. Let $\calL \subseteq V^*$ be an irreducible algebraic variety not contained in any hyperplane. Let $\ell_1 \vvirg \ell_n$ be generic elements of $\calL$ and $y_1, \dots, y_n \in \bbk$ be any set of scalars. Then the set 
    \[
    \calX \cap \{ v \in V : \ell_i(v) = y_i \text{ for all $i = 1 \vvirg n$}\}
    \]
    is finite (possibly empty).
\end{proposition}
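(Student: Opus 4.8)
The plan is to pass to a projective compactification and to trade the problem --- which a priori involves the scalars $y_1 \vvirg y_n$ through the affine slices $H(\ell_i - y_i)$ --- for a single condition ``at infinity'' that does not see the $y_i$. Write $N = \dim V$, realize $V \hookrightarrow \bbP(V \oplus \bbk)$ as an affine chart, with complementary hyperplane at infinity $\bbP(V)$, points written $[v:s]$ with $v \in V$, $s \in \bbk$; let $\bar{\calX}$ be the closure of $\calX$ (of dimension $n$) and $\calX_\infty = \bar{\calX} \cap \bbP(V)$, a projective variety of dimension at most $n-1$ (the case $\calX = \emptyset$ being trivial). The elementary but decisive observation is that the closure of the affine hyperplane $H(\ell - y)$ in $\bbP(V \oplus \bbk)$ is the hyperplane $\{[v:s] : \ell(v) = ys\}$, whose trace on $\bbP(V)$ is $\bbP(\ker\ell)$, \emph{independently of $y$}. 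Consequently, with $\Lambda = \ker\ell_1 \cap \dots \cap \ker\ell_n \subseteq V$,
\[
\bar{\calX} \cap \bbP(\Lambda) \;=\; \calX_\infty \cap \bbP(\ker\ell_1) \cap \dots \cap \bbP(\ker\ell_n),
\]
an intersection depending on $\ell_1 \vvirg \ell_n$ alone.

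Granting a generic tuple $(\ell_1 \vvirg \ell_n) \in \calL^{\times n}$ and $\bar{\calX} \cap \bbP(\Lambda) = \emptyset$, I would finish as follows. Since $\calL$ spans $V^*$ and $n = \dim\calX \le N$, a generic tuple is linearly independent, so $\bbP(\Lambda)$ is a linear subspace of dimension $N - n - 1$, and the linear projection $\pi_\Lambda \colon \bbP(V \oplus \bbk) \dashto \bbP^n$ away from $\bbP(\Lambda)$ --- given by $[v:s] \mapsto [\ell_1(v) : \dots : \ell_n(v) : s]$ --- restricts to a \emph{morphism} on $\bar{\calX}$ because $\bar{\calX}$ misses its base locus $\bbP(\Lambda)$. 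This morphism is proper (source projective) and quasi-finite: a fibre is $\bar{\calX} \cap W$ for some linear subspace $W \cong \bbP^{N-n}$ containing $\bbP(\Lambda)$ as a hyperplane, and a positive-dimensional component of it would meet that hyperplane (a projective variety of positive dimension meets every hyperplane of an ambient projective space), contradicting $\bar{\calX} \cap \bbP(\Lambda) = \emptyset$. A proper quasi-finite morphism is finite, hence \emph{all} fibres of $\pi_\Lambda|_{\bar{\calX}}$ are finite; since $\calX \cap \{v \in V : \ell_i(v) = y_i \text{ for all } i\}$ sits inside the single fibre $\pi_\Lambda^{-1}([y_1 : \dots : y_n : 1])$ for \emph{every} choice of $y_1 \vvirg y_n$, the proposition follows.

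It then remains to show that a generic tuple yields $\calX_\infty \cap \bbP(\ker\ell_1) \cap \dots \cap \bbP(\ker\ell_n) = \emptyset$. If $\calX_\infty = \emptyset$ this is automatic; otherwise set $d = \dim\calX_\infty \le n - 1$, so it suffices to use $d + 1 \le n$ of the hyperplanes. I would bound the dimension of the incidence variety
\[
\mathcal I = \bigl\{ (p, \ell_1 \vvirg \ell_{d+1}) \in \calX_\infty \times \calL^{\times(d+1)} : \ell_i(p) = 0 \text{ for all } i \bigr\}.
\]
Its projection to $\calX_\infty$ has fibre $(\calL \cap H_p)^{\times(d+1)}$ over $p$, where $H_p = \{\ell \in V^* : \ell(p) = 0\}$ is a hyperplane of $V^*$; since $\calL$ is irreducible and contained in no hyperplane, $\calL \cap H_p$ is a proper closed subvariety of $\calL$, which is exactly \Cref{lemma: generic is nonconstant}. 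Hence
\[
\dim\mathcal I \;\le\; d + (d+1)(\dim\calL - 1) \;=\; (d+1)\dim\calL - 1 \;<\; \dim\calL^{\times(d+1)},
\]
so $\mathcal I \to \calL^{\times(d+1)}$ is not dominant, and a generic tuple $(\ell_1 \vvirg \ell_n)$ lies off the closure of its image; for such a tuple no point of $\calX_\infty$ is killed by all of $\ell_1 \vvirg \ell_{d+1}$, i.e.\ the intersection is empty.

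The step I expect to be the crux is the uniformity over $y_1 \vvirg y_n$, which is precisely what the passage to infinity is meant to handle. A direct approach would cut $\calX$ by the affine hyperplanes $H(\ell_i - y_i)$ one at a time, dropping the dimension at each step via \Cref{lemma: generic is nonconstant}; but after the first cut one must control the whole \emph{family} $\{\calX \cap H(\ell_1 - y_1)\}_{y_1 \in \bbk}$, and a generic next measurement would a priori depend on $y_1$, so arranging the cut to work for all $y_1$ simultaneously is the genuine obstacle. In $\bbP(V \oplus \bbk)$ this family collapses, because the trace of $H(\ell_i - y_i)$ at infinity forgets $y_i$; and then finiteness of \emph{every} fibre of a finite morphism supplies the uniform conclusion at no extra cost.
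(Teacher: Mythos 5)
Your proof is correct, but it takes a genuinely different route from the paper's. The paper argues by induction on $n$: by \Cref{lemma: generic is nonconstant} a generic $\ell\in\calL$ is non-constant on every positive-dimensional irreducible component of $\calX$, so $\calX\cap H(\ell_1-y_1)$ has dimension at most $n-1$ for every $y_1$, and one iterates on the resulting variety. You instead pass to the projective closure, use an incidence-variety dimension count to show that a generic tuple makes $\calX_\infty\cap\bbP(\ker\ell_1)\cap\dots\cap\bbP(\ker\ell_{d+1})$ empty (the key input, that $\calL\cap H_p$ is a proper subvariety of $\calL$, is precisely \Cref{lemma: generic is nonconstant} with $v_2=0$), and then conclude that every fibre of the induced projection is finite because a positive-dimensional component of $\bar{\calX}\cap W$ would meet the hyperplane $\bbP(\Lambda)$ of $W$. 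The trade-off is as you describe: the paper's induction is more elementary and directly yields the recursive description of the genericity locus given after its proof, but at each step the condition imposed on $\ell_{i+1}$ refers to the components of $\calX\cap H(\ell_1-y_1)\cap\dots\cap H(\ell_i-y_i)$, which depend on the arbitrary scalars $y_1,\dots,y_i$; establishing the ``for any set of scalars'' uniformity from that induction therefore requires an additional argument in families, a point the paper passes over quickly. Your version concentrates all the genericity into a single condition at infinity that never sees the $y_i$, so the uniform statement comes for free, and it connects naturally to the projective reformulation in \Cref{sec:projective}. Two minor remarks: the appeal to ``proper plus quasi-finite implies finite'' is superfluous, since your hyperplane-at-infinity argument already shows each fibre $\bar{\calX}\cap W$ is finite, which is all you use; and over a field that is not algebraically closed, the claim that a positive-dimensional projective variety meets every hyperplane should be read over the algebraic closure, which suffices here since the $\bbk$-points of the affine solution set are among the $\bar{\bbk}$-points (the paper's own arguments need the same care).
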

\begin{proof}
We proceed by induction on $n = \dim \calX$. In fact, we only require the inequality $\dim \calX \leq n$. If $n =0$, the statement is clear because $\calX$ is itself $0$-dimensional, namely it is a finite set of points. Suppose $n \geq 1$. Let $\calX^{(1)} \vvirg \calX^{(s)}$ be the irreducible components of $\calX$: then $\dim \calX^{(i)} \leq n$ for every $i = 1 \vvirg s$. By~\Cref{lemma: generic is nonconstant}, a generic element $\ell \in \calL$ is non-constant on every $\calX^{(i)}$, unless $\calX^{(i)}$ is a single point. Therefore, for a generic $\ell$ and an arbitrary $y \in \bbk$, if $\dim \calX^{(i)} \geq 1$, then the variety $\calX^{(i)} \cap H( \ell - y )$ is a (possibly empty) proper subvariety of $\calX^{(i)}$, because the restriction of $\ell$ to $\calX^{(i)}$ is not constantly equal to $y$. Since $\calX^{(i)}$ is irreducible, the irreducible components of $\calX^{(i)} \cap H(\ell - y)$ have dimension strictly less than $\dim \calX^{(i)}$. If $\dim \calX^{(i)} = 0$, that is, $\calX^{(i)} = \{p_i\}$ is a single point, then $\calX^{(i)} \cap H( \ell - y)$ is either $\{p_i\}$ or empty, depending on whether $\ell(p_i) = y$.

Now let $\calX_1 = \calX \cap H( \ell_1 - \xi_1)$. We showed that $\calX_1$ is an algebraic variety of dimension at most $n-1$. Therefore the induction hypothesis applies and for generic $\ell_2 \vvirg \ell_n$ and arbitrary $y_2 \vvirg y_n$, we conclude 
\[ 
\calX \cap \{ v \in V : \ell_i(v) = y_i \text{ for all $i = 1 \vvirg n$}\} 
\] 
is a (possibly empty) finite set of points.
\end{proof}

The proof of \Cref{thm: main informal} is now obtained by combining \Cref{lemma: generic is nonconstant} and \Cref{prop:special bertini cuts}.

\begin{proof}[Proof of \Cref{thm: main informal}]
Consider a generic tuple $(\ell_0, \dots, \ell_n)$ of $\calL^{\times(n+1)}$ and let $y_i = \ell_i(x)$. By \Cref{prop:special bertini cuts}, the set 
\[
\calX_0 = \calX \cap \{ v \in V : \ell_i(v) = y_i \text{ for $i= 1 \vvirg n$}\}
\]
is a finite set of points, and by assumption $x$ is one of such points since $y_i = \ell_i(x)$. Further, $\ell_0$ takes distinct values on all points of $\calX_0$ by \Cref{lemma: generic is nonconstant}. Therefore the only element $v$ of $\calX_0$ with the property $y_0 = \ell_0(v)$ is $v = x$. This concludes the proof.
\end{proof}

The proof of \Cref{prop:special bertini cuts} allows one to give a characterization of the genericity condition in \Cref{thm: main informal}. The Zariski open set of $\calL^{\times (n+1)}$ for which the statement holds can be constructed recursively as follows. The linear measurement $\ell_1$ should be chosen so that $H(\ell_1 - \ell_1(x))$ does not contain an irreducible component of $\calX$; since containing a component is a closed condition, this gives an open set $\Omega_1 \subseteq \calL$. The linear form $\ell_2$ should be chosen so that $H(\ell_2 - \ell_2(x))$ does not contain an irreducible component of $\calX \cap H(\ell_1-\ell_1(x))$; similarly to before, the pairs $(\ell_1,\ell_2)$ for which this holds form an open set $\Omega_2 \subseteq \Omega_1 \times \calL \subseteq \calL^{\times 2}$. Continuing this process, one obtains the desired open set.

\subsection{Projective setting}
\label{sec:projective}

We now restate the main result in the projective setting, which provides a variant of the classical Noether Normalization Lemma. 

For a linear subspace $U \subseteq \bbP V^*$, let 
\[
U^\perp = \{ p \in \bbP V : \ell(p) = 0 \text{ for all $\ell \in U$}\}
\]
denote the annihilator of $U$ in $\bbP V$. A straightforward dimension count provides $\dim U = \dim V - \dim U^\perp - 1$.

\begin{theorem}
Let $X \subseteq \bbP V$, $L \subseteq \bbP V^*$ be projective varieties, with $L$ linearly non-degenerate and irreducible and $\dim X = n$. Let $p \in X$. Let $(\ell_0 ,\dots, \ell_{n+1})$ be a generic tuple in $ L^{\times (n+2)}$. Then the rational projection map
\[
\bfell: \bbP V \dashto \bbP (V / \langle \ell_0 , \dots,  \ell_{n+1}\rangle^\perp)
\]
is well-defined on $X$ and $\bfell^{-1}(\bfell(p)) = \{ p \}$.
\end{theorem}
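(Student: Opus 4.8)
The plan is to derive the projective statement from the affine \Cref{thm: main informal} applied to the affine cones over $X$ and $L$. Let $\hat X \subseteq V$ and $\hat L \subseteq V^*$ denote these cones. Then $\hat X$ is an algebraic variety with $\dim \hat X = n + 1$, and $\hat L$ is irreducible (being the cone over the irreducible variety $L$) and not contained in any hyperplane of $V^*$ (since $L$ is linearly non-degenerate and $0 \in \hat L$). Fix once and for all nonzero representatives $\hat\ell_0, \dots, \hat\ell_{n+1} \in V^*$ of $\ell_0, \dots, \ell_{n+1}$, and put $W = \langle \hat\ell_0, \dots, \hat\ell_{n+1}\rangle \subseteq V^*$, so that $W^\perp = \{v \in V : \hat\ell_i(v) = 0 \text{ for all } i\}$ is the affine cone over the center $\langle\ell_0,\dots,\ell_{n+1}\rangle^\perp$ of the projection $\bfell$. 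Every genericity condition appearing in the proof of \Cref{thm: main informal} only refers to the affine hyperplanes $H(\hat\ell_i - \hat\ell_i(x))$, which are unchanged under the rescaling $\hat\ell_i \mapsto c\,\hat\ell_i$; hence these conditions cut out Zariski-open dense subsets of $L^{\times(n+2)}$, and we assume $(\ell_0,\dots,\ell_{n+1})$ generic in that (common) open set, noting that no hypothesis such as $\dim W = n+2$ is imposed or needed.

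First I would verify that $\bfell$ is well-defined on $X$, i.e.\ that $X$ is disjoint from the center, equivalently $\hat X \cap W^\perp = \{0\}$. Now $\hat X \cap W^\perp = \{ v \in \hat X : \hat\ell_i(v) = \hat\ell_i(0) \text{ for } i = 0, \dots, n+1 \}$ is exactly the set of points of $\hat X$ not separated from $0$ by the $n + 2 = \dim\hat X + 1$ measurements $\hat\ell_0, \dots, \hat\ell_{n+1}$. Since $0 \in \hat X$, applying \Cref{thm: main informal} to the variety $\hat X$, the variety $\hat L$, and the point $x = 0$ yields $\hat X \cap W^\perp = \{0\}$.

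Next I would show that the fiber of $\bfell|_X$ over $\bfell(p)$ is $\{p\}$. Pick $q \in X$ with $\bfell(q) = \bfell(p)$ and lift $p, q$ to nonzero vectors $\hat p, \hat q \in \hat X$. By the previous step neither lift lies in $W^\perp$, so the tuples $(\hat\ell_i(\hat p))_i$ and $(\hat\ell_i(\hat q))_i$ are nonzero; and $\bfell(q) = \bfell(p)$ says precisely that they are proportional, $\hat\ell_i(\hat q) = \lambda\,\hat\ell_i(\hat p)$ for all $i$, necessarily with $\lambda \neq 0$ (otherwise $\hat q \in W^\perp$). Replacing $\hat q$ by $\lambda^{-1}\hat q \in \hat X$, we get $\hat\ell_i(\hat q) = \hat\ell_i(\hat p)$ for all $i = 0,\dots,n+1$. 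Applying \Cref{thm: main informal} to $\hat X$, $\hat L$, and the point $x = \hat p$ — using the same generic tuple — forces $\hat q = \hat p$, hence $q = p$ in $\bbP V$, which completes the argument.

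The main thing to be careful about is the bookkeeping of genericity: a single generic tuple must serve both invocations of \Cref{thm: main informal}, at $x = 0$ and at $x = \hat p$, which is immediate since each invocation only removes a Zariski-closed subset of $L^{\times(n+2)}$ and one intersects the two complements; and the passage to cone representatives must be harmless, which is the rescaling invariance noted in the first paragraph. I do not expect any deeper obstacle. Should one prefer to stay in projective space, one can instead prove a homogeneous version of \Cref{prop:special bertini cuts}: for irreducible $Z \subseteq \bbP V$ of positive dimension, the set $\{\ell \in L : Z \subseteq H(\ell)\}$ is contained in the set of $\ell \in \bbP V^*$ vanishing on the linear span of $Z$, which is a proper linear subspace of $\bbP V^*$ once $Z \neq \emptyset$; by non-degeneracy of $L$ this set is a proper subvariety of $L$, and the induction of \Cref{prop:special bertini cuts} carries over with each affine hyperplane $H(\ell - y)$ replaced by the linear hyperplane $H(\ell) = \{[v] : \ell(v) = 0\}$. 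The cone reduction is, however, the most economical route.
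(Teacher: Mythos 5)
Your proof is correct. For well-definedness you argue exactly as the paper does: apply \Cref{thm: main informal} to the affine cone $\hat{X}\subseteq V$ (of dimension $n+1$) at the point $0$ with the $n+2$ measurements, concluding that the cone meets the center of projection only in $0$. For the fiber statement, however, the paper takes a different route: it dehomogenizes, passing (after reordering so that $\ell_{n+1}(p)\neq 0$) to the affine chart $U=\{[v]:\ell_{n+1}(v)\neq 0\}$ with representatives normalized by $\ell_{n+1}(w)=1$, and applies \Cref{thm: main informal} to the affine variety $X\cap U$ of dimension at most $n$ with the $n+1$ functionals $\ell_0,\dots,\ell_n$. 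You instead stay on the cone: you note that $\bfell(q)=\bfell(p)$ means the tuples $(\hat{\ell}_i(\hat{q}))_i$ and $(\hat{\ell}_i(\hat{p}))_i$ are nonzero and proportional, rescale $\hat{q}$ so that they agree, and invoke \Cref{thm: main informal} a second time on $\hat{X}$ at $x=\hat{p}$ with all $n+2$ functionals. Both are valid reductions to the affine theorem. Your variant is arguably tidier: it avoids the choice of chart (itself a reordering step justified by the first part), the normalization of representatives, and the implicit question of which variety the functionals restricted to the chart form inside the dual of that affine space; the price is the small rescaling argument and the observation that the genericity loci for the two invocations (at $0$ and at $\hat{p}$) are invariant under rescaling each $\hat{\ell}_i$ and hence descend to open dense subsets of $L^{\times(n+2)}$ --- both of which you handle explicitly and correctly.
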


Here, the dashed arrow ``$\dashto$'' indicates that the map $\bfell$ is only defined on a Zariski open subset of $\bbP V$, namely the set $\bbP V \setminus \langle \ell_0 \vvirg \ell_{n+1} \rangle^\perp$; see \cite[Ch. 7]{Harris:AlgGeo}.
\begin{proof}
 In coordinates, we have 
 \[
    \bfell: \bbP V \dashto \bbP ^{n}, \quad
    [v] \mapsto [\ell_0(v) \vvirg \ell_n(v)],
 \] 
 and $\bfell$ is not defined on the linear subspace $\langle \ell_0 , \dots,  \ell_n\rangle^\perp$. To show that $\bfell$ is well-defined on $X$ we need to show that $\langle \ell_0 , \dots,  \ell_n\rangle^\perp \cap X = \emptyset$. This follows from \Cref{thm: main informal} applied to the affine cone $\calX = \hat{X} \subseteq V$ over $X$. Let $v = 0 \in \hat{X}$: we have $\dim \calX = n+1$ and $\ell_0 \vvirg \ell_{n+1}$ are $n+2$ generic linear measurements such that $\ell_i(v) = 0$. By \Cref{thm: main informal}, $v=0$ is the only vector of $\calX$ satisfying $\ell_i(v) = 0$. So for every $v' \in \calX \setminus \{0\}$, we have $\bfell(v') \neq 0$ namely $\langle \ell_0 , \dots,  \ell_n\rangle^\perp \cap X = \emptyset$.

To show that $\bfell^{-1}(\bfell(p)) = \{ p \}$, we apply \Cref{thm: main informal} to a suitable dehomogenization of $X$. Up to reordering, assume $\ell_{n+1}(p) \neq 0$ and let $U \subseteq \bbP V$ be the affine open set $U = \{ [v] \in \bbP V : \ell_{n+1}(v) \neq 0\}$. Moreover, one can choose representatives $w \in V$ for $[w] \in U$ such that $\ell_{n+1}(w) = 1$. Let $\calX = X \cap U$, which is an affine variety of dimension (at most) $n$. Applying \Cref{thm: main informal} to $\calX$ and generic linear forms $\ell_0 \vvirg \ell_n$, we deduce that $p$ is uniquely determined by $\ell_0(p) \vvirg \ell_n(p)$. In turn, this guarantees that $p$ is uniquely determined by $\bfell(p)$ and this concludes the proof.
\end{proof}

We remark here that in the above proof we have twice made the assumption that our choice of linear functionals is generic.
The first one is about the preimage of $0$ via $n+2$ linear functionals being the singleton $\{0\}$.
The second one is about the preimage of $\bfell(p)$.
Therefore our functionals must be chosen in the intersection of two Zariski open sets, 
which is still Zariski open, 
meaning that the result holds generically. 

\section{Sharpness of the result and examples}

The proof of \Cref{thm: main informal} suggests that it is possible that the space $\calX_0$ obtained after $(\dim \calX)$-many affine linear measurements already consists of a single point. This occurs, for instance, in the method of cross approximation for low rank matrices, see \Cref{subsec: cutsInfinity}, and it is always the case if $\calX$ is a linear space. In this section we provide some examples where we can avoid the last cut, but we also show that this is not possible in general. 

First, observe that choosing measurement generically one always obtains, after $\dim \calX$-many linear measurements, a number of points that does not depend on the cuts. This number is the degree of the variety $\calX$, see, e.g., \cite[Ch.~18]{Harris:AlgGeo}.

In principle, one can use additional knowledge on the variety to choose the hyperplanes in a non-generic way so that after $(\dim \calX)$-many linear measurements, rather than finitely many points, one obtains exactly one. However, the theory guarantees that, projectively, one always obtains $\deg \calX$ points counted with multiplicity: algebraically, this is a $0$-dimensional scheme, whose support is the set of points, and the non-reduced structure keeps track of the multiplicities. Therefore, the non-generic choice should be made so that, after $(\dim \calX)$-many measurement, one obtains a $0$-dimensional scheme with most components lying on the hyperplane at infinity and exactly one component lying on the affine chart of interest.

We present here some examples which show that in general we cannot expect to identify elements in $\calX$ with only $\dim(\calX)$ generic linear measurements. 

\subsection{Linear sections with points at infinity}\label{subsec: cutsInfinity}

We begin with a simple example highlighting one of the phenomena that can occur. Let $V = \bbk^2$ and consider the affine variety $\calX = \{ (t,t^2) : t \in \bbk \} \subseteq V$: this is a parabola in the plane and $\dim \calX = 1$. Fix $x = (t_0,t_0^2) \in X$. Let $\ell = \alpha_1 x_1 + \alpha_2 x_2 \in V^*$ and $y = \ell(x)$. For a generic choice of $\ell$, we have $\alpha_2 \neq 0$, and the equation $\ell(t,t^2) - y = 0$ has two solutions: $t_0$ and the other root of the polynomial $\alpha_2 t^2 + \alpha_1 t - y=0$. Therefore, a generic measurement identifies two points on $\calX$. On the other hand, the non-generic choice of $\ell$ with $\alpha_2 =0$ uniquely identifies $x$ because the equation $\ell(t,t^2) - y = 0$ then has a single solution $t_0$. Projectively, the second intersection point of $\calX \cap H( \ell - y)$ is on the line at infinity of $V \subseteq \bbP(V \oplus \bbk)$. We point out that the subvariety $\calL \subseteq V^*$ of linear measurements of the form $\ell = \alpha_1 x_1$ is contained in a hyperplane of $V^*$: in particular, \Cref{thm: main informal} does not even apply, and this phenomenon is unrelated from the statement of the theorem.

A similar phenomenon occurs when considering the cross approximation algorithm, also known as CUR approximation, for low-rank matrices; see, e.g.,~\cite{Goreinov1997,Mahoney2009}. Let $\calX = \{ A \in \bbk^{d_1 \times d_2} \colon \rank(A) \le k\}$ and let $A \in \calX$ be a matrix of with $\rank(A) = k$. For subsets $I \subseteq \{1,\dots,d_1\}$ and $J \subseteq \{1,\dots,d_2\}$ of indices with $\abs{I} = \abs{J} = k$, we denote by $A^I \in \bbk^{k \times d_2}$, $A_J \in \bbk^{d_1 \times k}$ and $A^I_J \in \bbk^{k \times k}$ the submatrices of $A$ obtained by extracting rows and/or columns in $I$ and $J$, respectively. If $A^I_J$ is invertible, one verifies that
\[
A = A_J  (A^I_J)^{-1} A^I.
\]
Therefore, a rank-$k$ matrix $A$ is completely determined by its entries in some \emph{cross} of entries $\Omega = \{(i,j) \colon i \in I \text{ or } j \in J \}$. The number of such entries is $(d_1+d_2-k)k$ which equals the dimension of the variety $\calX$. The entries of $\Omega$ define $(\dim \calX)$-many linear measurements and fixing such entries is equivalent to intersecting $\calX$ with a (highly non-generic) linear space of codimension equal to $\dim \calX$.

We point out that the subvariety $\calL \subseteq V^*$ of linear measurements corresponding to single entries is reducible. In particular, in this setting \Cref{thm: main informal} does not apply, and, similarly to before, this phenomenon is unrelated from the statement of the theorem. In~\cite{Tsakiris2023} algebraic conditions on non-random patterns for identifying low-rank matrices from entries have been studied. Connections with the theory of algebraic matroids and matrix rigidity are outlined in~\cite{KTT,GHIL}.

\subsection{Non-reduced intersections}\label{subsec: nonreduced cuts}

We show with an example a different phenomenon than the one occurring in \Cref{subsec: cutsInfinity}. It is possible, also in the projective setting, that for every $x \in \calX$, there is a particular choice of $\ell_1 \vvirg \ell_n$, with $n = \dim \calX$, such that $\langle \ell_1 \vvirg \ell_n \rangle^\perp \cap \calX$ is a single point. Intuitively, one can design examples for which $\langle \ell_1 \vvirg \ell_n \rangle^\perp$ is a subspace of the tangent space $T_x \calX$ at $x$, and such tangent space only intersects $\calX$ at $x$.

A simple example is the one of conics in the plane. Let $\dim V = 3$ and let $X = \{ x = [x_0,x_1,x_2] \in \bbP V : q(x) = 0\}$ for some homogeneous polynomial $q$ of degree $2$. Then for every $x \in X$ there is exactly one linear form $\ell \in \bbP V^*$ such that $\ell^\perp = T_x X \subseteq \bbP^2$. The intersection $\ell^\perp \cap X$ consists of $2 = \deg X$ points, counted with multiplicity: the condition that $\ell^\perp$ is tangent to $X$ guarantees that the intersection is not reduced at $x$, so the two points coincide. In this case, the intersection $\ell^\perp \cap X$ is a single (non-reduced) point, and the single linear measurement $\ell$ is sufficient to uniquely identify it. We point out that this situation is more artificial than the one of \Cref{subsec: cutsInfinity} because determining $\ell$ requires knowledge not only of the variety $X$ but also of the point $x$ to be recovered. 

\subsection{Minimality of $n+1$ measurement in general}

We show that in general one must require necessarily $\dim \calX + 1$ many linear measurements. This is true for most varieties $\calX \subseteq V$ and $\calL \subseteq V^*$. We show it for some examples: more general examples can be constructed similarly.

For instance, if $X = \{ x = [x_0,x_1,x_2] \in \bbP V : q(x) = 0\}$ is a conic curve in $\bbP V = \bbP^2$, then the discussion of \Cref{subsec: nonreduced cuts} shows that for every $x \in X$ there is a unique $\ell \in \bbP V^*$ such that $\ell^\perp \cap X = \{ x \}$. If $L \subseteq \bbP V$ is any irreducible curve different from the dual variety $X^\vee$ of $X$, then $L \cap X^\vee$ consists of only finitely many points: recovering with a single measurement will only be possible for the finitely many points $x \in X$ such that $T_x X$ is defined by a line $\ell \in L$.

For more general varieties the situation is more complicated. But in general, we expect that for most varieties $\calX \subseteq V$, with $\dim X = n$ only very few points of $\calX$ can be recovered with exactly $n$ linear measurements. 

We illustrate in detail a minimal example in which all but a finite number of $x \in \calX$ require exactly $\dim \calX + 1$ measurements from $V^*$. Fix $\lambda \neq 0,1$ and let $\calX \subseteq \bbk^2$ be the cubic curve 
    \[
    \calX = \{ (x_1,x_2): x_2^2 = x_1(x_1-1)(x_1-\lambda)\}.
    \]
Let $X = \bar{\calX} = \{ (x_0,x_1,x_2) \in \bbP^2 : x_2^2x_0 = x_1(x_1-x_0)(x_1-\lambda x_0) \}$ be its closure in projective space. The intersection of $X$ with the line at infinity $\{x_0 = 0\}$ is the point $p_\infty = (0,0,1)$ with multiplicity $3$. In particular, the line at infinity is tangent to $X$ at $p_\infty$ and $p_\infty$ is an inflection point for $X$. There are eight more inflection points $p_1 \vvirg p_8$ on $X$, given by the eight solutions of the polynomial system
\begin{align*}
0 &= x_2^2 - x_1(x_1-1)(x_1-\lambda); \\ 
0 &= (\lambda (\lambda+1-3x_1)x_1) + (\lambda-(\lambda+1)x_1)^2 + x_2^2 (\lambda+1-3x_1) ;
\end{align*}
the second equation arises as the Hessian determinant of the polynomial defining $X$, after setting $x_0 =1$. Moreover, there are three points $q_1, q_2, q_3 \in \calX$ with the property that the tangent line $T_{q_j} X$ meets $X$ at $p_\infty$. They are $(x_1,x_2) = (0,0),(1,0),(\lambda,0)$. An example where two of the eight inflection points are real is shown in \Cref{fig:cubic}.
\begin{figure}[t]
        \begin{tikzpicture}
            \node[inner sep=0pt] () at (0,0){\includegraphics[width=.3\textwidth]{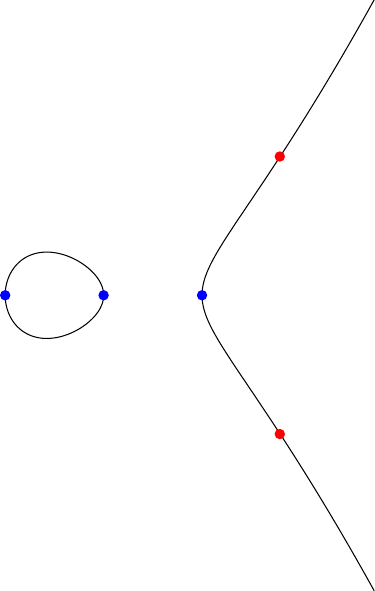}};
            \node[inner sep = 0pt] () at (-.1,0){$q_3$};
            \node[inner sep = 0pt] () at (-.75,0){$q_2$};
            \node[inner sep = 0pt] () at (-2.4,0){$q_1$};
            \node[inner sep = 0pt] () at (1.5,1.7){$p_1$};
            \node[inner sep = 0pt] () at (1.5,-1.7){$p_2$};
        \end{tikzpicture}
\caption{The real part of a cubic curve with $\lambda = 2$. The red points $p_1,p_2$ are two of the nine inflection points. The tangent lines at the three blue points $q_1,q_2,q_3$ are parallel and they meet at the point $p_\infty$. } \label{fig:cubic}
\end{figure}

Now, let $p \in \calX$ be any point different from $p_1, \dots, p_8, q_1, q_2,  q_3$. Let $\ell = \alpha_1x_1 + \alpha_2x_2$ be any linear form, let $y = \ell(p)$ and $H (\ell-y) = \{ (x_1,x_2) : \ell(x_1,x_2) - y = 0\}$. We claim that $H (\ell-y) \cap \calX$ always contains at least one point of $\calX$ different from $p$. If $\ell$ is generic, then by B\'ezout's Theorem \cite[Ch.~18]{Harris:AlgGeo}, $H (\ell-y) \cap \calX$ consists of three distinct points, one of them being $p$. 

There are four special (non-generic) choices of $\ell$ such that $H (\ell-y) \cap \calX$ is tangent at some other point of $X = \bar{\calX}$. These lines can be constructed as follows. The dual curve $X^\vee \subseteq  \bbP V^*$ of $X$, parametrizing all tangent lines to $X$, has degree $6$. The point $p$ corresponds to a line $p^\perp \subseteq \bbP V^*$, which is tangent to $X^\vee$ at the point corresponding to $T_pX$ and intersects $X^\vee$ at four more points. These four points correspond to four lines in $\bbP V$ which are tangent to $X$ at four points. Note that none of these points is the point at infinity $p_\infty$ because the tangent line to $X$ at $p_\infty$ is the line at infinity itself. Hence for these special lines $H(\ell - y)$ the intersection $H (\ell_j-y) \cap \calX$ consists of $p$ and the point to which $H(\ell - y)$ is tangent to $X$, which are two points of $\calX$.

Finally, if $H (\ell-y)$ is the tangent line to $\calX$ at $p$, then $H (\ell-y) \cap \calX$ has to contain one more point, because $p$ is not an inflection point nor a point whose tangent line intersects $\bar{\calX}$ at infinity.

This shows that no linear form $\ell$ uniquely determines $p$ in $X$. On the other hand, since $\dim \calX = 1$, \Cref{thm: main informal} guarantees that two generic enough linear forms $\ell_1, \ell_2$ will uniquely determine $p$.

For curves of higher degree, one can prove that under mild genericity assumptions on the curve $\calX \subseteq \bbk^2$, no point on $\calX$ can be recovered with a unique measurement. This relies on some classical facts on the geometry of the dual curve. We omit the full proof and we only sketch the argument. Let $X = \{ f = 0\} \subseteq \bbP^2$ be an irreducible curve, where $f \in \bbk[x_0,x_1,x_2]_d$ is a generic homogeneous polynomial of degree $d \geq 4$. Then the singularities of the dual curve of $X$ are simple nodes and simple cusps \cite[Prop.~1.2.4]{GKZ}: in particular, for every $p \in X$, and every line $L \subseteq \bbP^2$ passing through $p$, the intersection $ L \cap X$ contains at least two points other than $p$.

Let $L_\infty = \{ x_0 = 0\} \subseteq \bbP^2$ and $\calX = X \setminus L_\infty \subseteq \bbk^2 = \bbP^2 \setminus L_\infty$ be the affine curve defined by the dehomogenization $f|_{x_0 = 1}$ of the polynomial $f$. Fix $p \in \calX$, a linear form $\ell$, and set $y = \ell(p)$. The fact stated above then guarantees that the intersection $H(\ell - y) \cap \calX$ contains at least one point different from $p$.  In other words, $p$ cannot be uniquely determined with a single linear measurement.

\subsection*{Acknowledgments} We thank Paul Breiding, Giorgio Ottaviani and Nick Vannieuwenhoven for helpful discussions and for pointing out several relevant examples. 

This work was partially supported by the Thematic Research Programme ``Tensors: geometry, complexity and quantum entanglement'', University of Warsaw, Excellence Initiative -- Research University and the Simons Foundation Award No. 663281 granted to the Institute of Mathematics of the Polish Academy of Sciences for the years 2021--2023. The work of A.U.~was supported by the Deutsche Forschungs\-gemeinschaft (DFG, German Research Foundation) – Projektnummer 506561557.

{\small
\bibliographystyle{alphaurl}
\bibliography{linearcuts.bib}
}

\end{document}